\documentclass{mrlart7}

  \setcounter{page}{10001}
   %temporary numbers
  %%replace NN with the pagelength of the paper%%
  \overfullrule=5pt 

\usepackage{graphicx}
\usepackage{indentfirst}
\usepackage{amsmath,amsfonts,amsthm,amssymb,xypic}
\usepackage{mathrsfs}
\usepackage{amscd}
\usepackage{color}
\usepackage{hyperref}

\def\co{\colon\thinspace}
\DeclareMathAlphabet{\mathsfsl}{OT1}{cmss}{m}{sl}

\newcommand{\bZ}{{\mathbb{Z}}}

\newcommand{\SKh}{\mbox{SKh}}

\newcommand{\bF}{\mathbb{F}}
\newcommand{\bL}{\mathbb{L}}
\newcommand{\cF}{\mathcal{F}}
\newcommand{\cI}{\mathcal{I}}
\newcommand{\CKh}{\mbox{CKh}}
\newcommand{\Wedge}{\Lambda}
\newcommand{\Kh}{\mbox{Kh}}

\newtheorem{thm}{Theorem}[section]
\newtheorem{lem}[thm]{Lemma}
\newtheorem{cor}[thm]{Corollary}
\newtheorem{prop}[thm]{Proposition}

\theoremstyle{definition}
\newtheorem{defn}[thm]{Definition}

\newtheorem{rem}[thm]{Remark}

\begin{document}

\title{Sutured Khovanov homology distinguishes braids from other tangles}

\author{J. Elisenda Grigsby}

\address{Department of Mathematics, Boston College\\
Carney Hall, Chestnut Hill, MA
02467}
\email{grigsbyj@bc.edu}

\author{Yi Ni}

\address{Department of Mathematics, Caltech, MC 253-37\\
 1200 E California Blvd, Pasadena, CA
91125}

\email{yini@caltech.edu}

\begin{abstract}
We show that the sutured Khovanov homology of a balanced tangle in the product sutured manifold $D \times I$ has rank $1$ if and only if the tangle is isotopic to a braid.
\end{abstract}

\date{}
\maketitle

\section{Introduction}

In \cite{KhovJones}, Khovanov constructed a categorification of the Jones polynomial that assigns a bigraded abelian group to each link in $S^3$. Sutured Khovanov homology is a variant of Khovanov's construction that assigns 
\begin{itemize}
	\item to each link $\bL$ in the product sutured manifold $A \times I$ (see Section~\ref{sec:SKhAnnLink}) a triply-graded vector space $\SKh(\bL)$ over $\bF := \bZ/2\bZ$ \cite{APS_KBSM, Roberts}, where $A=S^1\times[0,1]$ and $I=[0,1]$, and
	\item to each balanced, admissible tangle $T$ in the product sutured manifold $D \times I$ (see Section~\ref{sec:SKhTangle}) a bigraded vector space $\SKh(T)$ over $\bF$ \cite{KhovColJones, GWColJones}, where $D=D^2$.
\end{itemize}

Khovanov homology detects the unknot \cite{KM2010} and unlinks \cite{HN, BS}, and the sutured annular Khovanov homology of braid closures detects the trivial braid \cite{BG}. In this note, we prove that  the sutured Khovanov homology of balanced tangles distinguishes braids from other tangles.

\begin{thm}\label{thm:SKhBraid}
Let $T\subset D\times I$ be a balanced, admissible tangle. (See Subsection~\ref{sec:SKhTangle} for the definition.) Then
$\SKh(T)\cong\mathbb F$ if and only if $T$ is isotopic to a braid in $D\times I$.
\end{thm}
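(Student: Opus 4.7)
For the forward direction, if $T$ is a braid then $\SKh(T)\cong\bF$ follows by a direct computation. Using invariance of $\SKh$ under tangle isotopy one reduces to the identity braid, whose Khovanov complex is supported in a single generator corresponding to the unique flat resolution.

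For the converse, my plan is to pass to the branched double cover and combine two detection results. The first step is to invoke a spectral sequence of the form
\[
E_2 \cong \SKh(T) \;\Longrightarrow\; \mathrm{SFH}\bigl(-\Sigma(D\times I,T)\bigr),
\]
relating $\SKh$ of the balanced tangle to the sutured Floer homology of the sutured double cover of $D\times I$ branched along $T$, with sutures induced from $\partial D\times\{\tfrac12\}$. This is the tangle version of Roberts's spectral sequence for annular links, extending the Ozsv\'ath--Szab\'o spectral sequence from Khovanov homology to $\widehat{HF}$ of branched double covers. If $\SKh(T)\cong\bF$, the spectral sequence must collapse at $E_2$, so $\mathrm{SFH}(\Sigma(D\times I,T))\cong\bF$. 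Admissibility of $T$ ensures the double cover is a taut, balanced sutured manifold, so by Ni's rank-one detection theorem for $\mathrm{SFH}$ we conclude that $\Sigma(D\times I,T)\cong F\times I$ as sutured manifolds for some compact oriented surface $F$.

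The final step is to deduce from this product structure that $T$ itself is a braid. The strategy is a Thurston norm argument. For generic $t\in(0,1)$, the horizontal disk $D\times\{t\}$ meets $T$ transversely in $k_t$ points and lifts to a properly embedded surface $\Sigma_t\subset\Sigma(D\times I,T)$, namely the double cover of $D$ branched at those $k_t$ points. Each $\Sigma_t$ represents the relative homology class of $F\times\{\mathrm{pt}\}$, so Thurston norm minimization in the product sutured manifold $F\times I$ gives $\chi_-(\Sigma_t)\geq\chi_-(F)$. A Riemann--Hurwitz computation shows that $\chi_-(\Sigma_t)$ is a strictly decreasing function of $k_t$ (in the relevant range), and that $\chi_-(F)$ is attained exactly at $k_t=n$. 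Since $k_0=k_1=n$, any local maximum or minimum of $T$ along the $I$-direction would produce $k_t<n$ and hence violate the inequality. Thus $k_t\equiv n$, so $T$ meets every horizontal disk transversely in $n$ points, i.e., $T$ is a braid.

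The step I expect to be the main obstacle is the last one. Verifying the Thurston norm inequality requires care: one must check that the lifted surfaces $\Sigma_t$ are incompressible and correctly placed in their homology class, and one must treat small values of $n$ (in particular $n\leq 3$) where the Euler characteristic comparison degenerates and a separate argument---perhaps a finite case analysis of non-braid balanced tangles---may be needed. A secondary concern is establishing the spectral sequence of step one in the setting of balanced tangles in $D\times I$, rather than in the already-documented contexts of links in $S^3$ or in $A\times I$.
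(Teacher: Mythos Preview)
Your overall strategy---spectral sequence to $\mathrm{SFH}$, then product detection, then deducing braidness---is exactly the paper's. The forward direction and the spectral sequence step are fine and match the paper. However, two of your steps contain genuine gaps.

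First, the claim that admissibility of $T$ makes $\Sigma(D\times I,T)$ taut is false: irreducibility can fail. For instance, if $T$ is a trivial braid connect-summed with a trefoil, the double cover is $(F\times I)\#\Sigma$ with $\Sigma$ the Poincar\'e sphere; this has $\mathrm{SFH}\cong\bF$ but is reducible and not a product. Ni's theorem genuinely requires irreducibility as a hypothesis, so you must establish it before invoking product detection. The paper does this in two nontrivial steps: a parity argument on $\dim\SKh$ shows $T$ is a string link (hence nonsplit), and then a connected-sum formula $\SKh(T_1\#K)\cong\SKh(T_1)\otimes Kh_r(K)$ together with Kronheimer--Mrowka's unknot detection forces $T$ to be prime. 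Only then does the Equivariant Sphere Theorem give irreducibility of the cover.

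Second, your Thurston norm argument does not prove that a product cover forces $T$ to be a braid. The inequality $\chi_-(\Sigma_t)\ge\chi_-(F)$ yields $k_t\ge n$ (for $n\ge 3$), not $k_t=n$; and your assertion that ``any local maximum or minimum of $T$ would produce $k_t<n$'' is incorrect. For a string link each strand meets every generic level an odd (hence $\ge 1$) number of times, so $k_t\ge n$ automatically---local extrema come in max/min pairs and only push $k_t$ above $n$, never below. Thus a non-braid string link (say with one strand carrying a long knot) never violates your inequality, and the argument gives no contradiction. The paper instead handles this step with Meeks--Scott: the covering involution on $F\times I$ is conjugated to one preserving the product structure, so its fixed set $\pi^{-1}(T)$ is $P\times I$ for a finite set $P\subset F$, which forces $T$ to be a braid.
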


Theorem~\ref{thm:SKhBraid} is one of many results about the connection between Floer homology and Khovanov homology, starting with the work of Ozsv\'ath and Szab\'o \cite{OSzBrCov}. This theorem is
an analogue of the fact that sutured Floer homology detects product sutured manifolds \cite{NiFibred,Ju2}, which is also an ingredient in our proof. Other ingredients include a spectral sequence relating sutured Khovanov homology and sutured Floer homology \cite{GWColJones}, Meeks--Scott's theorem on finite group actions on product manifolds \cite{MS},  and Kronheimer--Mrowka's theorem that Khovanov homology is an unknot detector \cite{KM2010}.

Given a link $\bL\subset A\times I$, the {\it wrapping number} of $\bL$ is the minimal geometric intersection number of all links isotopic to $\bL$ with the meridional disk of $A\times I$. Theorem \ref{thm:SKhBraid} combined with the observations in \cite{GWAnnLink} (see Proposition~\ref{prop:Cut}) imply:

\begin{cor}\label{cor:SKhBraid}
Let $\bL\subset A\times I$ be a link with wrapping number $\omega$, then the group
$$\SKh(\bL;\omega)=\bigoplus_{i,j}\SKh^{i}(\bL;j,\omega)$$
is isomorphic to $\mathbb F$ if and only if $\bL$ is isotopic to a closed braid in $A\times I$.
\end{cor}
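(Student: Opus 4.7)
The plan is to reduce Corollary~\ref{cor:SKhBraid} to Theorem~\ref{thm:SKhBraid} by means of the cutting observation recorded in Proposition~\ref{prop:Cut}. First, I would isotope $\bL$ inside $A \times I$ so that it meets a fixed meridional disk $D_0 \subset A\times I$ transversely in exactly $\omega$ points, which is possible by the definition of wrapping number. Cutting $A \times I$ along $D_0$ produces a manifold homeomorphic to $D \times I$, in which $\bL$ becomes a balanced tangle $T \subset D\times I$ with $\omega$ endpoints on each of $D \times \{0\}$ and $D \times \{1\}$. Minimality of the intersection should guarantee that $T$ has no nullhomotopic closed components, so $T$ is admissible in the sense of Subsection~\ref{sec:SKhTangle}.

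Second, I would invoke Proposition~\ref{prop:Cut}, which identifies the top summand
$$\SKh(\bL;\omega)\;\cong\;\SKh(T).$$
Theorem~\ref{thm:SKhBraid} then immediately implies that $\SKh(\bL;\omega)\cong\bF$ if and only if $T$ is isotopic rel boundary to a braid in $D\times I$.

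The remaining step is to translate the tangle statement back to a link statement, i.e.\ to check that $T$ is isotopic to a braid in $D\times I$ if and only if $\bL$ is isotopic to a closed braid in $A\times I$. The forward direction is immediate, since regluing $D\times I$ along $D_0$ sends a braid tangle to a closed braid and carries an isotopy of $T$ rel $\partial T$ to an isotopy of $\bL$. For the converse, if $\bL$ is isotopic to a closed braid $\bL'$, then the braid index of $\bL'$ coincides with the wrapping number $\omega$, so $\bL'$ can be arranged to meet $D_0$ transversely in exactly $\omega$ points; cutting $\bL'$ along $D_0$ produces a braid tangle in the same isotopy class as $T$. The main (and minor) obstacles are verifying admissibility after the cut and confirming that the closed-braid/braid dictionary respects isotopy, both of which follow from the standard correspondence between closed braids in $A\times I$ and braid tangles in $D\times I$; no new geometric input beyond Theorem~\ref{thm:SKhBraid} and Proposition~\ref{prop:Cut} is needed.
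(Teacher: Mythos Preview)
Your approach is exactly the one the paper intends: the corollary is stated as an immediate consequence of Theorem~\ref{thm:SKhBraid} together with Proposition~\ref{prop:Cut}, and your argument carries this out. Two small points are worth tightening.

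First, your justification of admissibility is misplaced: in this paper, \emph{admissible} simply means $\partial T \cap \gamma = \emptyset$, which is automatic for any tangle obtained by cutting $\bL$ along a meridional disk (all endpoints land on $D\times\{0,1\}$, not on $\partial D\times I$). It has nothing to do with nullhomotopic closed components.

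Second, in your converse step you assert that cutting the isotoped closed braid $\bL'$ along $D_0$ yields a braid tangle \emph{in the same isotopy class as $T$}. That last clause is not obvious: two minimal-intersection meridional cuts of the same annular link need not give isotopic tangles in $D\times I$. You do not actually need this. For the direction ``$\bL$ closed braid $\Rightarrow \SKh(\bL;\omega)\cong\bF$'', simply cut $\bL'$ along a fiber disk to get a genuine braid tangle $T'$ and apply Proposition~\ref{prop:Cut} and Proposition~\ref{prop:BraidRank} to $T'$; there is no need to compare $T'$ with your original $T$. With these two adjustments the argument is clean and matches the paper.
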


This corollary is an analogue of the fact that knot Floer homology detects fibered knots.

%%%%%
%%%%%
%%%%%
%%%%%
%%%%%

\section{Preliminaries}

In this section, we will review the basics about 
sutured manifolds \cite{Ga1} and sutured Khovanov homology \cite{APS_KBSM, Roberts, GWColJones, GWAnnLink}.

\begin{defn}
A {\it sutured manifold} $(M,\gamma)$ is a compact, oriented
3--manifold $M$, a set $\gamma\subset \partial M$, and a choice of orientation on each component of $R(\gamma)=\partial M\setminus \mathrm{int}(\gamma)$ such that:
\newline \qquad$\bullet$ $\gamma$ consists of
pairwise disjoint annuli $A(\gamma)$ and tori $T(\gamma)$,
\newline \qquad$\bullet$ if we define $R_+(\gamma)$ (resp., $R_-(\gamma)$) to be the union
of those components of $R(\gamma)$ whose normal vectors point out
of (resp., into) $M$, then each component of $A(\gamma)$ is adjacent to a component of $R_+(\gamma)$ and a component of $R_-(\gamma)$.
\end{defn}

As an example, let $S$ be a compact oriented surface, $M=S\times I$, $\gamma=(\partial S)\times I$,
$R_-(\gamma)=S\times\{0\}, R_+(\gamma)=S\times \{1\}$, then $(M,\gamma)$ is a sutured manifold. In this case we say that $(M,\gamma)$ is a {\it product sutured manifold}.

\begin{defn}\cite[Definition 2.2]{Ju1}
A {\it balanced sutured manifold} is a sutured manifold
$(M,\gamma)$ satisfying\\ (1) $M$ has no closed components.\\
(2) $T(\gamma)=\emptyset$.\\
(3) Every component of $\partial M$ intersects $\gamma$
nontrivially.\\
(4) $\chi(R_+(\gamma))=\chi(R_-(\gamma))$.
\end{defn}

If $(M,\gamma)$ is a balanced, sutured manifold, then $SFH(M,\gamma)$ will denote its {\em sutured Floer homology}, as defined by Juh{\'a}sz in \cite{Ju1}. Whenever $\gamma$ is implicit (e.g., when $M$ is a product), we shall omit it from the notation.

We will be interested in Khovanov-type invariants for certain links and tangles in product sutured manifolds.

\subsection{Sutured Khovanov homology of links in $A \times I$} \label{sec:SKhAnnLink}

Sutured annular Khovanov homology, originally defined in \cite{APS_KBSM}, \cite{Roberts} (see also \cite{GWAnnLink}) associates to an oriented link $\mathbb{L}$ in the product sutured manifold $A \times I$ a triply-graded vector space \[\SKh(\mathbb{L})=\bigoplus_{i,j,k} \SKh^i(\mathbb{L};j,k),\] which is an invariant of the oriented isotopy class of $\mathbb{L}\subset A\times I$.

To define it, one chooses a diagram $\mathcal{D}_\bL$ of $\mathbb{L}$ on $A\times \left\{\frac{1}{2}\right\}$. By filling in one boundary component of $A \times \{\frac{1}{2}\}$ with a disk marked with a basepoint $X$ at its center and the other boundary component with a disk marked with a basepoint at its center, one obtains a diagram on $S^2-\{X,O\}$. Ignoring the $X$ basepoint yields a diagram on $\mathbb R^2 = S^2 - \{O\}$ from which the ordinary bigraded Khovanov chain complex 
\[\CKh(\mathcal{D}_\bL) :=\bigoplus_{i,j} \CKh^{i}(\mathcal{D}_\bL;j)\] can be constructed from a cube of resolutions.  Here, $i$ and $j$ are the homological and quantum gradings, respectively. The basepoint $X$ gives rise to a filtration on $\CKh(\mathcal{D}_\bL)$, and $\SKh(\bL)$ is the homology of the associated graded object. 

To define this filtration,  choose an oriented arc from $X$ to $O$ missing all crossings of $\mathcal{D}_\bL$. As described in \cite[Sec. 4.2]{GWGradings}, the generators of $\CKh(\mathcal{D}_\bL)$ are in one-to-one correspondence with {\em oriented} resolutions, where the counterclockwise orientation on each circle corresponds to the generator $v_+$. The ``$k$" grading of an oriented resolution is defined to be the algebraic intersection number of this resolution with our oriented arc. Roberts proves (\cite[Lem. 1]{Roberts}) that the Khovanov differential does not increase this extra grading.

One therefore obtains a bounded filtration,
\[ 0 \subseteq \ldots \subseteq \cF_{n-1}(\mathcal{D}_\bL) \subseteq \cF_{n}(\mathcal{D}_\bL) \subseteq \cF_{n+1}(\mathcal{D}_\bL) \subseteq \ldots \subseteq \CKh(\mathcal{D}_\bL),\] where $\mathcal{F}_n(\mathcal{D}_\bL)$ is the subcomplex of $\CKh(\mathcal{D}_\bL)$ generated by oriented resolutions with $k$ grading at most $n$. Let \[\mathcal{F}_n(\mathcal{D}_\bL;j) = \mathcal{F}_n(\mathcal{D}_\bL)\,\cap \,\bigoplus_i \CKh^i(\mathcal{D}_\bL;j).\] The sutured annular Khovanov homology groups of $\bL$ are  defined to be  \[\SKh^i(\bL;j,k) := H^i\left(\frac{\mathcal{F}_{k}(\mathcal{D}_\bL;j)}{\mathcal{F}_{k-1}(\mathcal{D}_\bL;j)}\right).\] 

It is an immediate consequence of the definitions that if $\bL$ has wrapping number $\omega$, then $\SKh^i(\bL;j,k) \cong 0$ for $k \not\in \{-\omega, -(\omega - 2), \ldots, \omega - 2, \omega\}$.

We shall denote by $\Sigma(A \times I, \bL)$ the sutured manifold obtained as the double cover of $A \times I$ branched along $\bL$ (cf. \cite[Rmk. 2.6]{GWAnnLink}), where $\gamma$ is the cover of $(\partial A)\times I$, and $R_+$ (resp., $R_-$) is the cover of $A\times\{1\}$ (resp., $A\times\{0\}$).

\subsection{Sutured Khovanov homology of balanced tangles in $D \times I$} \label{sec:SKhTangle}

A tangle $T$ in the product sutured manifold $(D \times I, \gamma)$ is said to be {\em admissible} if $\partial T \cap \gamma = \emptyset$,  and {\em balanced} if $|T \cap (D \times \{0\})| = |T \cap (D \times \{1\})|$. To make sense of tangle composition (stacking), we will fix an identification of $D$ with the standard unit disk in $\mathbb C$ and assume that $\partial T$ intersects both $D \times \{0\}$ and $D \times \{1\}$ along the real axis.

The sutured Khovanov homology of an admissible, balanced tangle in $D \times I$ was defined by Khovanov in \cite[Sec. 5]{KhovColJones} in the course of constructing a categorification of the reduced $n$--colored Jones polynomial. An elaboration of Khovanov's construction is given in \cite[Sec. 5]{GWColJones}, where it is also related to sutured Floer homology. We briefly recall the main points of the construction here.

Let $T \subset D \times I$ be a balanced, admissible tangle and choose a diagram $\mathcal{D}_T$ of $T$ on $[-1,1] \times I$. Then the sutured Khovanov homology of $T$, $\SKh(T) = \bigoplus_{i,j} \SKh^{i}(T;j),$ is obtained as the homology of the complex, \[\CKh(\mathcal{D}_T) :=\bigoplus_{i,j} \CKh^i(\mathcal{D}_T;j)\] obtained as follows.

Number the $c$ crossings,  and construct a Khovanov-type {\em cube of resolutions} whose vertices are in one-to-one correspondence with elements of $\{0,1\}^c$. Associated to each such $\cI \in \{0,1\}^c$ is a {\em complete resolution} $R_\cI$ with $a_{\cI}$ closed components (circles) $T_1, \ldots, T_{a_\cI}$ and $b_{\cI}$ non-closed components (arcs) $T_{a_{\cI}+1}, \ldots, T_{a_{\cI} + b_{\cI}}$. We say that $R_\cI$ {\em backtracks} if the boundary of at least one of its non-closed components is contained in $[-1,1] \times \{1\}$. We now assign to the corresponding vertex in the cube of resolutions the vector space 

\[V(R_{\cI}) := \left\{\begin{array}{cl}
			0 & \mbox{if $R_{\cI}$ backtracks}\\
			\Wedge^*(Z(R_\cI)) & \mbox{otherwise,}
			\end{array}\right.\]
where \[Z(R_\cI) := \frac{\mbox{Span}_\bF\{[T_1], \ldots, [T_{a_\cI + b_\cI}]\}}{\mbox{Span}_\bF([T_{a_\cI + 1}] , \ldots,[T_{a_\cI + b_\cI}])}\] is the vector space formally generated by the closed components of $R_\cI$, which for convenience we realize as a quotient space of the vector space formally generated by {\em all} components of $R_\cI$.

As in ordinary Khovanov homology, if $\cI'$ is an {\em immediate successor} of $\cI$ in the language of \cite[Sec. 4]{OSzBrCov} and \cite[Sec. 4]{GWColJones}, then one obtains $R_{\cI'}$ from $R_\cI$ by either merging two components $T_i$ and $T_j$ of $R_\cI$ to form a component $T'$ of $R_\cI'$ or splitting a single component $T$ of $R_\cI$ into two components $T_i'$ and $T_j'$ of $R_{\cI'}$, and in both cases leaving all other components unchanged.

With the above understood, we now associate a map \[F_{R_\cI \rightarrow R_{\cI'}}: V(R_\cI) \rightarrow V(R_{\cI'})\] to every pair of immediate successors as follows.

If at least one of $R_\cI$, $R_{\cI'}$ backtracks, we define $F_{R_\cI \rightarrow R_{\cI'}} := 0$.

Otherwise, $R_\cI \rightarrow R_{\cI'}$ is either a merge or split cobordism involving either two closed components or one closed component and one non-backtracking arc.

If $R_{\cI} \rightarrow R_{\cI'}$ is a merge, we define $F_{R_\cI \rightarrow R_{\cI'}}$ to be the composition 
\[\xymatrix{V(R_\cI) \ar[r]^-\pi & \frac{V(R_\cI)}{[T_i] \sim [T_j]} \ar[r]^-\alpha & V(R_\cI')},\] where $\alpha$ is the isomorphism on exterior algebras induced by the isomorphism \[\frac{Z(R_\cI)}{[T_i] \sim [T_j]} \cong Z(R_{\cI'})\] identifying $[T_i] = [T_j]$ with $[T']$. 

If $R_\cI \rightarrow R_{\cI'}$ is a split, we define $F_{R_\cI \rightarrow R_{\cI'}}$ to be the composition 
\[\xymatrix{V(R_\cI) \ar[r]^-{\alpha^{-1}} & \frac{V(R_\cI)}{[T_i']\sim[T_j']} \ar[r]^-\varphi & V(R_{\cI'})},\] where $\varphi(a) := ([T_i'] + [T_j']) \wedge \widetilde{a}$, and $\widetilde{a}$ is any lift of $a$ in $\pi^{-1}(a)$.

The image of $\theta \in V(R_{\cI})$ under the boundary map $\partial$ on the complex is now defined to be \[\partial(\theta) := \sum_{R_{\cI'}} F_{R_{\cI} \rightarrow R_{\cI'}}(\theta),\] where the sum is taken over all immediate successors $\cI'$ to $\cI$. Extend linearly.

\begin{rem} If $T$ is an admissible $(n,n)$ tangle in $D \times I$ and $\mathcal{D}_T$ is a diagram of $T$, then we can alternatively associate to $T$ a left $H^n$--module, $\cF(\mathcal{D}_T)$, as in \cite{KhTangle}, by viewing $T$ as a tangle with $2n$ upper endpoints (cf. \cite[Rmk. 5.9]{GWColJones}). The chain complex $\CKh(\mathcal{D}_T)$ may then be identified with $ \vec{{\bf v}}_- \otimes_{H^n} \cF(\mathcal{D}_T) ,$ where $\vec{{\bf v}}_-$ is the right $H^n$ module constructed as follows. Let $b$ denote the fully-nested crossingless match on $2n$ points; then $\vec{\bf v}_-$ is  the two-sided ideal of the $H^n$ module $\cF(W(b)b)$ corresponding to the generator whose strands are all labeled with a $v_-$. Via the correspondence between {\em oriented resolutions} and Khovanov generators described in the previous section (cf. \cite[Sec. 4.2]{GWGradings}), we may then identify $\CKh(\mathcal{D}_T)$ as the quotient complex obtained from the ordinary Khovanov complex of the closure, $\widehat{\mathcal{D}}_T$, of $\mathcal{D}_T$ by the subcomplex generated by all generators with Roberts' ``k"--grading less than $n$. This has the effect of setting to $0$ any vertex associated to a backtracking resolution and treating the non-backtracking non-closed components of a resolution just as basepointed strands are treated in Khovanov's {\em reduced} theory.
\end{rem}

Comparing the above description with the description of the sutured annular Khovanov invariant in the previous section, we have:

\begin{prop} \label{prop:Cut} {\rm\cite[Thm. 3.1]{GWAnnLink}}
If $\bL \subset A \times I$ is an oriented annular link with wrapping number $\omega$, and $T_\theta$ is the oriented, admissible balanced tangle obtained by decomposing $A \times I$ along a meridional disk $D_\theta$ for which $|\bL \cap D_\theta| = \omega$,
\[\SKh^i(\bL;j,\omega) \cong \SKh^i(T_\theta;j).\]
\end{prop}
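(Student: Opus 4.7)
The plan is to construct a chain-level isomorphism between the associated graded piece of $\CKh(\mathcal{D}_\bL)$ at the top filtration level $k=\omega$ and the tangle complex $\CKh(\mathcal{D}_{T_\theta})$, and to check that it preserves the homological and quantum gradings.

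I would begin by choosing a diagram $\mathcal{D}_\bL$ of $\bL$ on $A\times\{1/2\}$ realizing the wrapping number, so that the radial arc $\eta := D_\theta \cap (A\times\{1/2\})$ (extended through the capping disks to run from $X$ to $O$) meets $\mathcal{D}_\bL$ transversely in exactly $\omega$ points, away from all crossings. Cutting along $\eta$ converts $A\times I$ into $D\times I$ and $\mathcal{D}_\bL$ into a diagram $\mathcal{D}_{T_\theta}$ of $T_\theta$, and yields a canonical identification of their cubes of resolutions $\{0,1\}^c$: each circle of the annular resolution $R_\cI$ disjoint from $\eta$ becomes a closed component of the tangle resolution, and each circle meeting $\eta$ in $n$ points is cut into $n$ arcs.

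Next I would match generators. A generator of $\CKh(\mathcal{D}_\bL)$ at $k=\omega$ is an oriented resolution whose signed intersection with $\eta$ equals $\omega$, and since the geometric intersection already equals $\omega$, every intersection must contribute $+1$. This forces every circle of $R_\cI$ meeting $\eta$ to carry the counterclockwise orientation, and after the cut such a circle gives arcs all traversing $D\times I$ from bottom to top; the corresponding tangle resolution is therefore non-backtracking. The remaining orientation freedom lies on the circles disjoint from $\eta$, and the identification $v_+ \leftrightarrow 1$, $v_- \leftrightarrow [T_i]$ on each such closed component converts the set of $\{v_+,v_-\}$-labelings into a basis for $\Wedge^*(Z(R_\cI))$. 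Conversely, every non-backtracking resolution of $\mathcal{D}_{T_\theta}$ lifts uniquely to an oriented annular resolution at $k=\omega$.

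Finally I would check that the Khovanov differential on the $k=\omega$ associated graded agrees edge by edge with the differential on $\CKh(\mathcal{D}_{T_\theta})$. Edges of the cube fall into three types depending on whether the local merge or split involves two closed circles, one closed and one non-closed component, or two non-closed components. The all-closed case reproduces the usual Frobenius structure on $\bF[x]/(x^2)$ and matches immediately. For two non-closed components, either the output resolution backtracks and both sides vanish, or the annular term strictly lowers the $k$-grading and so dies on the associated graded. The main obstacle is the mixed case: one must verify that the quotient setting $[T_i]=0$ for non-closed components precisely records the counterclockwise orientation required to remain at $k=\omega$, so that the projection-and-isomorphism formula for merges and the multiplication-by-$([T_i']+[T_j'])$ formula for splits reproduce exactly the surviving top-$k$ parts of the annular merge and split. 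Once this differential match is established and the bigrading conventions reconciled, the chain isomorphism descends to the desired isomorphism $\SKh^i(\bL;j,\omega) \cong \SKh^i(T_\theta;j)$.
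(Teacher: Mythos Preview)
The paper does not supply its own proof of this proposition: it simply records the result with a citation to \cite[Thm.~3.1]{GWAnnLink}, prefaced by the sentence ``Comparing the above description with the description of the sutured annular Khovanov invariant in the previous section, we have,'' and the preceding Remark already identifies $\CKh(\mathcal{D}_T)$ with the quotient of the Khovanov complex of the closure $\widehat{\mathcal{D}}_T$ by the subcomplex of $k$--grading below $n$. Your proposal is therefore not competing with a proof in the paper but rather filling in the details of the comparison the paper only gestures at; what you outline is essentially the argument behind the cited theorem.

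Your sketch is sound. One small imprecision worth tightening: from ``geometric intersection $=\omega=$ algebraic intersection'' you correctly deduce that every crossing of a resolution circle with $\eta$ contributes $+1$, but you should spell out the consequence more carefully. A trivial circle meeting $\eta$ would contribute cancelling pairs, and an essential circle meeting $\eta$ in $2m+1$ points contributes only $\pm 1$ algebraically; hence at $k=\omega$ each circle meeting $\eta$ is essential and meets it \emph{exactly once}, and therefore cuts to a single vertical arc rather than ``arcs.'' With that in hand, the bijection between $k=\omega$ generators and non-backtracking tangle generators is clean, and your edge-by-edge differential check (closed/closed, closed/arc, arc/arc) goes through as you describe.
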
 

Since all but one resolution of a braid backtracks, we have:

\begin{prop}\label{prop:BraidRank}
If $T\subset D\times I$ is isotopic to a braid, then $SKh(T)\cong\mathbb F$. 
\end{prop}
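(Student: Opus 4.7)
The plan is to show that the cube of resolutions for a braid diagram is concentrated at a single vertex, the fully-oriented resolution, and that the associated vector space there is $\bF$. Since $\SKh$ is a tangle isotopy invariant, I may assume $T$ is given by a standard braid diagram $\mathcal{D}_T$ in $[-1,1]\times I$ with $n$ strands running monotonically upward and $c$ crossings. Let $\cI_0 \in \{0,1\}^c$ denote the resolution that applies the oriented (vertical) smoothing at every crossing. Then $R_{\cI_0}$ consists of $n$ disjoint monotone arcs joining the bottom boundary to the top boundary, with no closed components, so $Z(R_{\cI_0})=0$ and $V(R_{\cI_0}) = \Wedge^{*}(0) \cong \bF$.

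The heart of the argument is to verify that every other resolution $R_\cI$ with $\cI\neq \cI_0$ backtracks. I would do this by choosing the highest (in the vertical $I$--direction) crossing $c^{*}$ at which $\cI$ uses the disoriented (horizontal) smoothing. At $c^{*}$ this smoothing places a cap at the top of the crossing whose two endpoints emerge upward at adjacent horizontal positions $p$ and $p+1$. By maximality of $c^{*}$, every crossing strictly above $c^{*}$ receives the vertical smoothing, and a vertical smoothing at a crossing $\sigma_j$ sends its two lower endpoints straight to the two upper endpoints at the same horizontal positions. Consequently the two ends of the cap travel monotonically upward through all crossings above $c^{*}$ and arrive at two distinct points of $[-1,1]\times\{1\}$, yielding a non-closed component of $R_\cI$ whose boundary lies entirely in the top boundary of the tangle. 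Hence $R_\cI$ backtracks and $V(R_\cI)=0$ by definition.

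Combining these two observations, $\CKh(\mathcal{D}_T)$ is a single copy of $\bF$ sitting at the vertex $\cI_0$ with every other vertex equal to $0$; all differentials then vanish trivially and $\SKh(T) \cong \bF$. The only subtle point in the whole plan is the ``highest disoriented crossing'' observation in the second paragraph, but this is a simple geometric remark rather than a genuine obstacle: once one picks the top-most horizontal smoothing, every smoothing above it is by assumption vertical, and vertical smoothings cannot redirect a strand that is already heading straight upward.
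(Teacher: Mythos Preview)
Your proof is correct and follows exactly the approach the paper indicates: the paper's proof is the single sentence ``Since all but one resolution of a braid backtracks, we have [the proposition],'' and your argument supplies the missing verification of that claim via the highest-disoriented-crossing observation. One cosmetic remark: the arc you track is really a \emph{cup} (its ends emerge upward), not a cap, but this does not affect the argument.
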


%%%%%
%%%%%
%%%%%
%%%%%
%%%%%

\section{Proof of the main theorem}

\begin{defn}
A tangle $T\subset D\times I$ is a {\it string link} if it consists of proper arcs, each of which has one end on $D\times\{0\}$ and the other end on $D\times\{1\}$. 
\end{defn}

As a consequence, a string link $T$ contains no closed components, and $T$ does not backtrack.

\begin{lem}\label{lem:StringLink}
Let $T\subset D\times I$ be a balanced, admissible tangle, then $\dim_{\mathbb F}\SKh(T)$ is odd if and only if $T$ is a string link. 
\end{lem}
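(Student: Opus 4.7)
The plan is to induct on the number of crossings $c$ of a diagram $\mathcal{D}_T$ of $T$, using the mod-$2$ skein exact triangle for sutured Khovanov homology of tangles. For any crossing $c_0$ of $\mathcal{D}_T$ with two smoothings $T_0, T_1$, the standard short exact sequence of cube-of-resolutions complexes yields
\[
\dim_{\bF} \SKh(T) \equiv \dim_{\bF} \SKh(T_0) + \dim_{\bF} \SKh(T_1) \pmod 2.
\]

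For the base case $c=0$, the diagram is its own unique resolution, and $V(T)$ has dimension $2^{a_T}$ if $T$ does not backtrack (no arc has both endpoints on top) and $0$ otherwise. This dimension equals $1$ exactly when $T$ has no closed components and no top-to-top arcs; combined with the balance condition, a short counting argument then shows every arc of $T$ must run from a bottom point to a top point (any bottom-to-bottom arc would force a compensating top-to-top arc), so $T$ is a string link. Conversely any crossingless string link has $\dim_{\bF}\SKh(T)=1$.

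For the inductive step it suffices, by the mod-$2$ skein relation and induction, to prove the combinatorial identity
\[
[T\text{ is a string link}] + [T_0\text{ is a string link}] + [T_1\text{ is a string link}] \equiv 0 \pmod 2.
\]
Excising a small ball around $c_0$ produces a tangle $T'$ with $2n+4$ boundary points and some connectivity matching $m(T')$. The three tangles $T, T_0, T_1$ arise as the three reassemblies of $T'$ using the three perfect matchings of the four local endpoints $\{A, B, C, D\}$ of $c_0$: the crossing matching $\{AC, BD\}$, the vertical matching $\{AB, CD\}$, or the horizontal matching $\{AD, BC\}$. The boundary matching and new closed-loop count of each reassembly are determined by composing $m(T')$ with the chosen local matching, and the identity follows from case analysis on how $m(T')$ restricts to $\{A, B, C, D\}$: when all four are matched externally, no new loops arise and enumeration over the top/bottom types $t_A, t_B, t_C, t_D$ of the four external partners shows the number of reassembled boundary matchings that are bottom-to-top bijective is $0$ or $2$ but never $3$ (three simultaneous bijectivities would require both $t_B=t_C=t_D\ne t_A$ and $t_B\ne t_C$); when exactly two are matched internally, the three reassemblies produce the same boundary matching but exactly one creates a new closed loop (from a $2$-cycle in the composition); when all four are matched internally, every reassembly creates at least one new closed loop, so none is a string link.

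The chief obstacle is the combinatorial case analysis above: a single crossing's resolution does not directly preserve the string-link property, so one must verify globally that no configuration of $T'$ around $c_0$ can produce an odd number of string links among $\{T, T_0, T_1\}$. Framing the problem via composition of perfect matchings on the four local endpoints reduces this verification to the elementary enumeration described.
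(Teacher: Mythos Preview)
Your proof is correct but follows a genuinely different route from the paper's. The paper makes the single clean observation that a crossing \emph{change} (not a resolution) leaves the set of generators of $\CKh$ unchanged, so $\dim_{\bF}\SKh(T)\equiv\dim_{\bF}\CKh(\mathcal{D}_T)\pmod 2$ is invariant under crossing changes. This immediately reduces the lemma to three model cases reachable by crossing changes: a tangle containing an isolated circle (chain complex of even dimension), a tangle containing an isolated backtracking arc (every resolution backtracks, so the complex is zero), and a braid ($\SKh\cong\bF$). Your approach instead resolves a crossing, uses the skein short exact sequence modulo~$2$, and inducts on the crossing number, which forces the combinatorial verification that among $T,T_0,T_1$ an even number are string links. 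The two arguments are cousins---both ultimately rest on $\dim H\equiv\dim C\pmod 2$ and on $\dim\CKh(\mathcal D_T)=\dim\CKh(\mathcal D_{T_0})+\dim\CKh(\mathcal D_{T_1})$---but the paper's organization avoids the case analysis entirely. One small gap in your write-up: in Case~1 you argue that all three local matchings cannot simultaneously be bottom-to-top bijective, but you must also exclude exactly one; this follows by the symmetric argument, or more directly by noting that once the remaining arcs of $T'$ are all bottom-to-top, balance forces exactly two of the external partners on each side, whence exactly two of the three pairings of $\{a,b,c,d\}$ are bipartite.
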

\begin{proof} 
We observe that if two tangles $T_+,T_-$ differ by a
crossing change, then the corresponding  chain complexes $\CKh(T_+)$ and $\CKh(T_-)$ have the same set of generators, thus the parities of the total dimensions of their homology are the same.

If a tangle $T$ has closed components, after crossing changes we can transform $T$ to a tangle $T'$ with a diagram $\mathcal D'$ containing a
trivial loop. This loop persists in any complete resolution of $\mathcal D'$, so it follows from the construction that the dimension of $\CKh(\mathcal{D}')$ is even, hence  $\dim_{\mathbb F}\SKh(T)$ is even.

If $T$ backtracks, after crossing changes we can transform $T$ to a tangle $T'$ with an arc which can be isotoped rel boundary into $D\times\{0\}$ or $D\times\{1\}$ without crossing other components. We can find a diagram $\mathcal D'$ of $T'$ such that any complete resolution of $\mathcal D'$ backtracks. So $\CKh(\mathcal D')=0$,  and $\dim_{\mathbb F}\SKh(T)$ is even.

If $T$ is a string link, after crossing changes we can transform $T$ to a
braid $B$. By Proposition~\ref{prop:BraidRank}, $\SKh(B)\cong\mathbb F$, so $\dim_{\mathbb F}\SKh(T)$ is odd.
\end{proof}

\begin{defn}
A tangle $T\subset D\times I$ is {\it split}, if there exists a $3$--ball $B\subset D\times I$, such that $L_2=T\cap B$ is a link and $L_2\ne T$. In this case, let $T_1=T-L_2$, then we write $T=T_1\sqcup L_2$. We say $T$ is {\it nonsplit} if it is not split.

A tangle $T\subset D\times I$ is {\it nonprime}, if there exists a $3$--ball $B\subset D\times I$, such that $T_2=T\cap B$ is a $(1,1)$--tangle in $B$, and $T_2$ does not cobound a disk with any arc in $\partial B$. In this case, Let $T_1\subset D\times I$ be the tangle obtained by replacing $T_2$ with a trivial arc in $B$, and let $L_2$ be the link obtained from $T_2$ by connecting the two ends of $T_2$ by an arc in $\partial B$. We denote $T=T_1\#L_2$. 
We say $T$ is {\it prime} if there does not exist such a $B$.
\end{defn}

\begin{lem}\label{lem:DoubIrr}
Let $(M,\gamma)$ be the sutured manifold which is the double branched cover of $D^2\times I$ branched along $T$. Then $M$ is irreducible if and only if $T$ is nonsplit and prime.
\end{lem}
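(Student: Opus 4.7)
The plan is to prove both implications separately, with the forward direction by explicit construction of a reducing sphere in $M$ from the decomposing ball, and the converse by the equivariant sphere theorem.

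For the forward direction, suppose $T$ is split or nonprime. In the split case $T=T_1\sqcup L_2$ with nonempty $L_2\subset B$: since $\partial B\cap T=\emptyset$, the preimage $\pi^{-1}(\partial B)$ is two disjoint spheres $S_1\sqcup S_2$. Both $\pi^{-1}(B)$ (a double branched cover of $B$ along the nonempty link $L_2$) and $\pi^{-1}((D\times I)\setminus B)$ (a double branched cover along the nonempty tangle $T_1$) are connected, and they share the other boundary sphere $S_2$, so $S_1$ is non-separating in $M$. Hence $M$ is reducible. In the nonprime case via a ball $B$ with $T_2=T\cap B$ a nontrivial $(1,1)$-tangle, $|\partial B\cap T|=2$ forces $\pi^{-1}(\partial B)$ to be a single sphere $S$. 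The piece $\pi^{-1}(B)$ is the double branched cover of $B^3$ along a knotted arc, hence not a ball (essentially the Smith conjecture), while $\pi^{-1}((D\times I)\setminus B)$ has extra boundary inherited from $\partial(D\times I)$, so it is not a ball either. Thus $S$ is an essential sphere.

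For the converse, assume $M$ is reducible and argue for a contradiction. I would apply the equivariant sphere theorem of Meeks--Simon--Yau to the deck involution $\tau$, producing an essential sphere $S\subset M$ with either $\tau(S)=S$ or $\tau(S)\cap S=\emptyset$. In the disjoint case, $S$ misses the fixed set, so $\pi(S)\subset D\times I$ is a sphere disjoint from $T$; by Alexander's theorem it bounds a ball $B$ disjoint from $T$ (using that $T$ meets $\partial(D\times I)$), and then $\pi^{-1}(B)$ is two disjoint balls, making $S$ bound a ball. So $S$ is $\tau$-invariant, and $\tau|_S$ is an involution of $S^2$. The identity action is impossible ($S$ cannot lie in the $1$-dimensional fixed set) and the free antipodal action is ruled out because $\pi(S)\cong\mathbb{RP}^2$ does not embed in the orientable $D\times I$. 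In the reflection case, $\pi(S)$ is a disk in $D\times I$ whose boundary is a closed component $L$ of $T$ with interior disjoint from $T\setminus L$, and a regular neighborhood of this disk splits $T$. In the rotation-by-$\pi$ case, $\pi(S)$ is a sphere in $D\times I$ meeting $T$ transversely in $2$ points; by Alexander, it bounds a ball $B$, and $T\cap B$ is one arc plus possibly some closed components. The closed components would split $T$, so $T\cap B$ is a $(1,1)$-tangle, which by primality cobounds a disk with an arc in $\partial B$. It follows that $\pi^{-1}(B)\cong B^3$, and $S$ bounds a ball, again contradicting essentiality.

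The main obstacle is the converse: one must justify the equivariant sphere theorem for a compact $3$-manifold with boundary and an orientation-preserving $\bZ/2$-action with $1$-dimensional fixed set, and then in each of the surviving involution types translate the quotient data in $D\times I$ into either a split or nonprime decomposition of $T$. A secondary subtlety is the essentiality check in the forward direction, which requires careful tracking of the connectedness of the branched covers of both pieces cut by $B$ in order to conclude non-separating (split case) or non-ball-bounding (nonprime case).
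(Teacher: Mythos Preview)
Your argument is correct and is precisely the approach the paper takes: the paper's proof simply invokes the Equivariant Sphere Theorem of Meeks--Simon--Yau and cites the identical case analysis carried out in \cite[Proposition~5.1]{HN}, and your write-up is a faithful expansion of that argument. The one implicit hypothesis you flag---that $T$ meets $\partial(D\times I)$---is likewise implicit in the paper, which only applies the lemma to string links.
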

\begin{proof}
The conclusion follows from the Equivariant Sphere Theorem \cite{MSY} by the same argument as in \cite[Proposition~5.1]{HN}.
\end{proof}

\begin{lem}\label{lem:Conn}
If $T=T_1\# L_2$ is a nonprime string link, then
$$
\SKh(T)\cong \SKh(T_1)\otimes Kh_r(L_2).
$$
\end{lem}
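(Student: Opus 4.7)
The plan is to construct a natural chain-complex isomorphism
$$\CKh(\mathcal{D}_T) \;\cong\; \CKh(\mathcal{D}_{T_1}) \otimes \CKh_r(\mathcal{D}_{L_2}),$$
from which the lemma follows by the K\"unneth formula over $\bF$. First I would choose compatible diagrams: since $T = T_1 \# L_2$ along a ball $B$, pick $\mathcal{D}_T$ to be a diagram $\mathcal{D}_{T_1}$ of $T_1$ in which a short subarc $\delta$ is replaced by a planar diagram $\mathcal{D}_{T_2}$ of the $(1,1)$-tangle $T_2$ confined to a small disk. Close $\mathcal{D}_{T_2}$ up by an external arc $\delta'$ to obtain a diagram $\mathcal{D}_{L_2}$ of $L_2$, and place the reduced-Khovanov basepoint on $\delta'$.

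The crossings of $\mathcal{D}_T$ then partition into those of $\mathcal{D}_{T_1}$ and those of $\mathcal{D}_{T_2}$, so each complete resolution $R_\cI$ corresponds to a pair $(R_{\cI_1}, R_{\cI_2})$. Any complete resolution of a $(1,1)$-tangle diagram consists of exactly one ``passing arc'' together with some closed circles; in $R_\cI$ the passing arc splices into the arc of $R_{\cI_1}$ meeting $\delta$, while in $\mathcal{D}_{L_2}$ it closes up with $\delta'$ into the basepoint circle. It follows that $R_\cI$ backtracks if and only if $R_{\cI_1}$ does, and that the closed components of $R_\cI$ are the disjoint union of the closed components of $R_{\cI_1}$ and the non-basepoint circles of $R_{\cI_2}$ viewed inside $\mathcal{D}_{L_2}$. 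This yields the natural chain-group decomposition
$$V(R_\cI) \;\cong\; V(R_{\cI_1}) \otimes V_r(R_{\cI_2}).$$

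Next I would verify edge-map compatibility. A flip of a crossing of $\mathcal{D}_{T_1}$ leaves $R_{\cI_2}$ fixed and induces the standard $\CKh(\mathcal{D}_{T_1})$ edge map on the $R_{\cI_1}$ factor; a flip of a crossing of $\mathcal{D}_{T_2}$ leaves $R_{\cI_1}$ fixed and induces the standard reduced edge map on the $R_{\cI_2}$ factor. What makes this work is that the bridging arc of $R_\cI$ (in the $T$-picture) and the basepoint circle of $\mathcal{D}_{L_2}$ (in the reduced picture) both have class $0$ in their respective $Z$-spaces; consequently the excerpt's formulas $F = \alpha \circ \pi$ for merges, which kills the class of the other component, and $F = \varphi \circ \alpha^{-1}$ for splits, which wedges by the class of the other component, coincide edge by edge.

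The main obstacle is the case analysis for edges involving the passing arc, where in the $T$-picture an arc and a circle exchange via case (ii) of the excerpt (``one closed component and one non-backtracking arc''), while in the reduced $L_2$-picture the basepoint circle and a non-basepoint circle exchange. Writing out both merge and split formulas in these two framings and confirming they match is the crux of the verification. Once the chain isomorphism is established, the K\"unneth theorem over $\bF$ gives $\SKh(T) \cong \SKh(T_1) \otimes \Kh_r(L_2)$.
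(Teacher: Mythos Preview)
Your strategy—establish a chain-level tensor decomposition $\CKh(\mathcal{D}_T) \cong \CKh(\mathcal{D}_{T_1}) \otimes \CKh_r(\mathcal{D}_{L_2})$ and then invoke the K\"unneth theorem over $\bF$—is exactly the paper's. The paper, however, realizes $T$ not by inserting $T_2$ along an interior subarc $\delta$ but by \emph{stacking}: it writes $T$ as the composition of $T_1$ with an $(n,n)$-tangle $L_2^{*,n}$ obtained from $L_2$ by opening it near the connect-sum point and adjoining $n-1$ trivial vertical strands. Both factors are then balanced tangles in $D\times I$, and $\CKh(\mathcal{D}_{L_2^{*,n}})\cong\CKh(\mathcal{D}_{L_2^{*,1}})$ computes $Kh_r(L_2)$.

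Your version has a gap at the very point you flag as the ``main obstacle.'' You repeatedly refer to ``the arc of $R_{\cI_1}$ meeting $\delta$'' and ``the bridging arc of $R_\cI$,'' but nothing in your setup forces the $\delta$-component of $R_{\cI_1}$ to be an arc: if $\delta$ sits in the interior of the diagram, in some resolutions it lies on a closed circle $C_\delta$. In that case the bridging component of $R_\cI$ is a circle with class $[C_\delta]\neq 0$, and the edge-map comparison fails. Concretely, for a $T_2$-edge merging the passing arc with a circle $C$, the actual map on $\CKh(\mathcal{D}_T)$ sends $1\otimes[C]\mapsto[C_\delta]\otimes 1$, whereas your proposed tensor differential sends $1\otimes[C]\mapsto 0$. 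The easy fix is to isotope the connect-sum region so that $\delta$ is adjacent to $D\times\{0\}$ (or $D\times\{1\}$); then the $\delta$-component of every resolution has an endpoint on $\partial(D\times I)$ and is genuinely an arc. That choice is precisely what the paper's stacking picture builds in from the start.
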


In the above, $Kh_r(L_2)$ denotes the reduced Khovanov homology of $L_2$.

\begin{proof}
We choose a diagram $\mathcal{D}_T$ of $T$ realized as the composition of diagrams $\mathcal{D}_{T_1}$ of $T_1$ and $\mathcal{D}_{L_2^{*,n}}$ $L_2^{*,n}$, where $L_2^{*,n}$ is an $(n,n)$ tangle obtained from $L_2$ by removing a neighborhood of a point near the connected sum region and adjoining $n-1$ trivial strands as pictured in Figure~\ref{fig:ConnectSum}.

 \begin{figure}[t]
 \label{fig:ConnectSum}
 \begin{center}
 \includegraphics[width=1in]{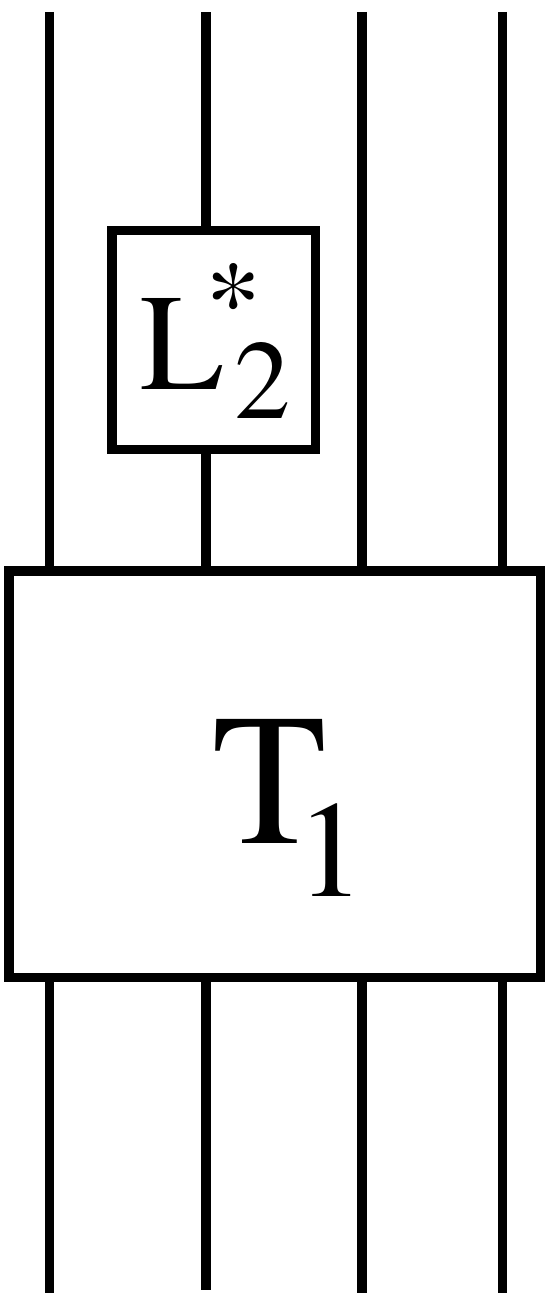}
  \end{center}
 \caption{The tangle $T = T_1 \# L_2$, realized as a composition of $T_1$ and $L_2^{*,n}$.}
 \end{figure}

Now we claim that \[\CKh(\mathcal{D}_T) \cong  \CKh(\mathcal{D}_{T_1}) \otimes_{\bF} \CKh(\mathcal{D}_{L_2^{*,n}}).\] Since $\CKh(\mathcal{D}_{L_2^{*,n}})$ is canonically chain isomorphic to $\CKh(\mathcal{D}_{L_2^{*,1}})$, and the homology of the latter complex is the reduced Khovanov homology of $L_2$ with $\bF$ coefficients, the lemma will then follow from the K{\"u}nneth theorem.

To see the claim, note first that each resolution $R$ of $\mathcal{D}_T$ is obtained by stacking a resolution $R_1$ of $\mathcal{D}_{T_1}$ and $R_2$ of $\mathcal{D}_{L_2^{*,n}}$.

Moreover:
\begin{itemize}
	\item $R$ backtracks iff at least one of $R_1$, $R_2$ backtracks, and
	\item If $R$ does not backtrack, then the number of closed components of $R$ is the sum of the number of closed components of $R_1$ and $R_2$.
\end{itemize}

Hence, the $\bF$--vector space underlying the chain complex $\CKh(\mathcal{D}_T)$ is canonically isomorphic to $\CKh(\mathcal{D}_{T_1}) \otimes_{\bF} \CKh(\mathcal{D}_{L_2^{*,n}})$.

To verify that the boundary map $\partial_{T}$ on $\CKh(\mathcal{D}_{T})$ agrees with the induced boundary map on the tensor product, i.e.: \[\partial_{T} = \partial_{T_1} \otimes \mbox{Id} + \mbox{Id} \otimes \partial_{L_2^{*,n}},\] it is sufficient to verify that the two maps agree on any decomposable generator $\theta= \theta_1 \otimes \theta_2$ of $\CKh(\mathcal{D}_T)$ associated to a resolution $R = (R_1,R_2)$. We may further assume, without loss of generality, that $R$ does not backtrack.

By definition \[\partial_T(\theta) = \sum_{R' = (R_1',R_2')} F_{R \rightarrow R'}(\theta)\] where the sum above is taken over all {\em immediate successors} $R'$ to $R$.

But if $R' = (R_1',R_2')$ is an immediate successor of $R$, then either $R_1'$ is an immediate successor of $R_1$ and $R_2' = R_2$, or vice versa. Assume for definiteness that it is the former, the latter case being analogous.

If $R'$ backtracks, then so does $R_1'$, so: \[F_{R \rightarrow R'}(\theta) = \left(F_{R_1 \rightarrow R_1'} \otimes \mbox{Id}\right)(\theta_1 \otimes \theta_2) = 0.\]

If $R'$ does not backtrack, then the saddle cobordism connecting $R_1$ to  $R_1'$ is a merge (resp., split) connecting either
\begin{itemize}
	\item two closed components of $R_1$ (resp., of $R_1'$), or
	\item one closed and one vertical component of $R_1$ (resp., of $R_1'$).
\end{itemize}

In either case, we see that
\[F_{R \rightarrow R'}(\theta) = \left[F_{R_1 \rightarrow R_1'} \otimes \mbox{Id}\right](\theta_1 \otimes \theta_2).\]

We conclude that 
\begin{eqnarray*}
	\partial_T(\theta) &=& \left[\sum_{R' = (R_1',R_2')} F_{R \rightarrow R'}\right](\theta)\\
	&=& \left[\left(\sum_{R_1'} F_{R_1 \rightarrow R_1'}\right) \otimes \mbox{Id} + \mbox{Id} \otimes \left(\sum_{R_2'} F_{R_2 \rightarrow R_2'}\right)\right] (\theta_1 \otimes \theta_2)\\
	&=& \left[\partial_{T_1} \otimes \mbox{Id} + \mbox{Id} \otimes \partial_{L_2^{*,n}}\right] (\theta_1 \otimes \theta_2),
\end{eqnarray*}
as desired.
\end{proof}

\begin{prop}\label{prop:ProdCov}
Suppose that $T\subset D^2\times I$ is a balanced, admissible tangle. If the double branched cover of $D^2\times I$ branched along $T$ is a product sutured manifold, then $T$ is isotopic to a braid.
\end{prop}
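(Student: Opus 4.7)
The plan is to use the Meeks--Scott theorem to put the covering involution of the double branched cover into a standard, product-preserving form; $T$ then appears downstairs as the image of a set of vertical arcs, which is by definition a braid. Let $\tau\co M\to M$ be the covering involution of the double branched cover $\pi\co M\to D^2\times I$, so that $\mathrm{Fix}(\tau)=\pi^{-1}(T)$. By construction, $R_\pm(M)$ is the double cover of $D^2\times\{1\}$ (resp.\ $D^2\times\{0\}$) branched along $T\cap(D^2\times\{1\})$ (resp.\ $T\cap(D^2\times\{0\})$), so $\tau$ preserves each of $R_+$, $R_-$, and $\gamma$ setwise. By hypothesis $M\cong F\times I$ as a product sutured manifold for some compact surface $F$, and under this identification $\tau$ becomes a smooth orientation-preserving involution of the $I$-bundle $F\times I$ respecting its sutured-boundary decomposition.

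I would next invoke the Meeks--Scott theorem \cite{MS} on smooth finite group actions on $I$-bundles over surfaces: after a $\tau$-equivariant ambient isotopy of $M$ (fixing the sutured boundary), one may assume $\tau$ acts by $(x,t)\mapsto(\tau_F(x),t)$ for some orientation-preserving involution $\tau_F\co F\to F$ with $F/\tau_F=D^2$. Because an orientation-preserving involution on a surface can have only isolated fixed points, $\mathrm{Fix}(\tau_F)\subset F$ is a finite set, and hence $\mathrm{Fix}(\tau)=\mathrm{Fix}(\tau_F)\times I$ is a disjoint union of vertical $I$-fibers. The equivariant isotopy descends to an ambient isotopy of $D^2\times I$ carrying $T$ to the image of these vertical arcs in $(F/\tau_F)\times I=D^2\times I$, which is manifestly a braid.

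The main obstacle is the invocation of Meeks--Scott in this compact, sutured-boundary setting: one must produce a conjugating isotopy that is simultaneously $\tau$-equivariant and respects the decomposition $\partial M=R_+\cup\gamma\cup R_-$, so that it descends to an honest ambient isotopy of the sutured manifold $D^2\times I$. Since $\tau$ already preserves this decomposition and $\gamma$ is a union of product annuli between $R_+$ and $R_-$, this reduces to the standard $I$-bundle version of Meeks--Scott, but it is where all the essential geometric content of the proof is absorbed.
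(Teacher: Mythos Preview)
Your approach is essentially the paper's: invoke Meeks--Scott to conjugate the covering involution to a product map $\tau_F\times\mathrm{id}$, observe that the fixed set is then $\mathrm{Fix}(\tau_F)\times I$, and descend to the quotient. One terminological slip worth flagging: a $\tau$-\emph{equivariant} isotopy $h_t$ (one commuting with $\tau$) satisfies $h_1\tau h_1^{-1}=\tau$ and so cannot straighten $\tau$ at all, hence certainly does not descend to an isotopy moving $T$. What Meeks--Scott actually furnishes is a conjugating homeomorphism $h$ with $h\tau h^{-1}$ product-preserving; this $h$ induces a homeomorphism of quotient pairs $(D^2\times I,T)\to\bigl((F/\tau_F)\times I,\ \mathrm{Fix}(\tau_F)\times I\bigr)$ respecting $D^2\times\partial I$, and since any such self-homeomorphism of the $3$--ball is isotopic to the identity, $T$ is isotopic to a braid. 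This is exactly the paper's (terser) argument.
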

\begin{proof}
Let $\pi\co F\times I\to D^2\times I$ be the double branched covering map, then the nontrivial deck transformation $\rho$ is an involution on $F\times I$ that preserves $F\times\partial I$ setwise.
By Meeks--Scott \cite[Theorem~8.1]{MS}, $\rho$ is conjugate to a map preserving the product structure.\footnote{A homeomorphism of $X\times Y$ {\it preserves the product structure} if it is the product of homeomorphisms of $X$ and $Y$.} In particular, $\pi^{-1}(T)$, being the set of fixed points of $\rho$, is homeomorphic to $P\times I\subset F\times I$ for some finite set $P\subset F$, via a homeomorphism of $F\times I$ which preserves $F\times\partial I$. It follows that $T$ is isotopic to a braid.
\end{proof}

\begin{prop}\label{prop:KM}
A knot $K\subset S^3$ is the unknot if and only if $Kh_r(K)\cong\mathbb F$.
\end{prop}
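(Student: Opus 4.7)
The plan is brief, because Proposition~\ref{prop:KM} is essentially a restatement of the main theorem of Kronheimer and Mrowka~\cite{KM2010}. The forward direction is immediate from the definition: for the zero-crossing diagram of the unknot, the reduced Khovanov complex has a single generator, so $Kh_r$ of the unknot is $\mathbb F$.

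For the converse, I would follow Kronheimer--Mrowka's strategy and invoke a spectral sequence whose $E_2$-page is $Kh_r(K;\mathbb F)$ (with $\mathbb F = \mathbb{Z}/2\mathbb{Z}$) converging to a version of singular instanton knot Floer homology $I^\natural(K)$. Such a spectral sequence is constructed by iterating an unoriented skein exact triangle for $I^\natural$ across the crossings of a diagram of $K$, in the spirit of the Ozsv\'ath--Szab\'o cube-of-resolutions argument from \cite{OSzBrCov}. The rank inequality $\dim_{\mathbb F} Kh_r(K) \geq \dim_{\mathbb F} I^\natural(K)$ then reduces the claim to showing that $I^\natural$ detects the unknot, which in turn follows from the existence, for every non-trivial knot $K$, of an irreducible $SU(2)$ representation of $\pi_1(S^3 \setminus K)$ sending the meridian to a traceless element, producing a second generator of $I^\natural(K)$ beyond the one coming from the trivial representation.

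The hard part of the whole argument, and the reason that \cite{KM2010} is cited as an external input rather than reproduced, is the gauge-theoretic machinery: the skein triangle for $I^\natural$ requires a careful analysis of moduli spaces of singular instantons under holonomy perturbations, and the nontrivial $SU(2)$ representation statement relies on a separate deep result about the character variety of a knot complement. For our purposes here we simply invoke \cite{KM2010} as a black box.
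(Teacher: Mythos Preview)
Your outline is correct and, like the paper, simply invokes \cite{KM2010} as the essential input. One point the paper handles more carefully than you do is the coefficient ring: Kronheimer--Mrowka's spectral sequence and rank inequality are stated over $\mathbb{Z}$, giving $\mathrm{rank}\,Kh_r(K;\mathbb{Z}) \ge \mathrm{rank}\,I^\natural(K) > 1$ for nontrivial $K$, and the paper then applies the universal coefficients theorem to deduce $\dim_{\mathbb F} Kh_r(K;\mathbb F) > 1$. You instead assert an $\mathbb F$--coefficient spectral sequence directly; this is plausible but not what is literally proved in \cite{KM2010}, so the UCT step is the cleaner way to get to the $\mathbb F$--statement actually needed here. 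Also, your heuristic that an irreducible traceless $SU(2)$ representation ``produces a second generator'' of $I^\natural(K)$ is not how the detection result is established in \cite{KM2010} --- there the argument passes through sutured instanton homology and a nonvanishing theorem --- but since you are treating that result as a black box, this does not affect the validity of your proof.
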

\begin{proof}
This result is essentially a theorem of Kronheimer and Mrowka \cite{KM2010}. The original theorem of Kronheimer and Mrowka states that $K$ is the unknot if and only if $Kh_r(K;\mathbb Z)\cong\mathbb Z$, where the coefficients ring is $\mathbb Z$ while ours is $\mathbb F$. However, the version with $\mathbb F$ coefficients easily follows from Kronheimer and Mrowka's argument. As shown in \cite[Corollary~1.3]{KM2010},
$$\mathrm{rank\:}Kh_r(K;\mathbb Z)\ge\mathrm{rank\:}I^{\natural}(K).$$
Kronheimer and Mrowka proved that $\mathrm{rank\:}I^{\natural}(K)>1$ when $K$ is nontrivial. (See the paragraph after \cite[Corollary~1.3]{KM2010}.) So $\mathrm{rank\:}Kh_r(K;\mathbb Z)>1$ when $K$ is nontrivial. It follows from the universal coefficients theorem that $\dim_{\mathbb F}Kh_r(K;\mathbb F)>1$ when $K$ is nontrivial.
\end{proof}

\begin{proof}[Proof of Theorem~\ref{thm:SKhBraid}]
By Lemma~\ref{lem:StringLink}, if $\SKh(T)\cong\mathbb F$, then $T$ is a string link. In particular, $T$ has no closed components, hence $T$ must be nonsplit.

Since $T$ has no closed components, if $T$ is nonprime it must be the connected sum of a tangle with a knot (rather than a link). Suppose that
$T=T_1\# K_2$, where $K_2$ is a knot. Then it follows from Lemma~\ref{lem:Conn} that $Kh_r(K_2)\cong\mathbb F$. 
Using Proposition~\ref{prop:KM}, we conclude that $K_2$ is the unknot. Hence $T$ is prime.

Since $T$ is nonsplit and prime, Lemma~\ref{lem:DoubIrr} implies that $\Sigma(D\times I,T)$ is irreducible.
Suppose that $\SKh(T)\cong\mathbb F$. By \cite[Proposition~5.20]{GWColJones}, there is a spectral sequence whose $E^2$ term is $SKh(T)$ and whose $E^{\infty}$ term is the sutured Floer homology group $SFH(\Sigma(D\times I,T))$. Hence $SFH(\Sigma(D\times I,T))\cong\mathbb F$. In \cite{NiFibred,Ju2}, it is shown that an irreducible balanced sutured manifold $(M,\gamma)$ is a product sutured manifold if and only if $SFH(M,\gamma)\cong\mathbb F$. Hence $\Sigma(D\times I,T)$ is a product sutured manifold. Proposition~\ref{prop:ProdCov} then implies that $T$ is isotopic to a braid.
\end{proof}

\section*{Acknowledgements}
The first author was partially supported by NSF grant numbers DMS-0905848 and CAREER DMS-1151671. The second author was partially supported by NSF grant number DMS-1103976 and an Alfred P. Sloan Research Fellowship. We thank the anonymous referee for a number of valuable suggestions.

\bibliographystyle{mrl}

\bibliography{SKhBraidsRef}

\begin{thebibliography}{10}

\bibitem{APS_KBSM}
M.~Asaeda, J.~Przytycki, and A.~Sikora, \emph{Categorification of the Kauffman
  bracket skein module of {$I$}-bundles over surfaces}, Algebr. Geom. Topol.
  \textbf{4} (2004) 1177--1210.

\bibitem{BG}
J.~Baldwin and J.~Grigsby, \emph{Categorified invariants and the braid group}
  (2012). To appear in Proc. Amer. Math. Soc., available at
  \href{http://front.math.ucdavis.edu/1212.2222}{arXiv:math.GT/1212.2222}.

\bibitem{BS}
J.~Batson and C.~Seed, \emph{A link splitting spectral sequence in {K}hovanov
  homology} (2013). Available at
  \href{http://front.math.ucdavis.edu/1303.6240}{arXiv:math.GT/1303.6240}.

\bibitem{Ga1}
D.~Gabai, \emph{Foliations and the topology of $3$--manifolds}, J. Differential
  Geom. \textbf{18} (1983), no.~3,  445--503.

\bibitem{GWAnnLink}
J.~Grigsby and S.~Wehrli, \emph{Khovanov homology, sutured Floer homology, and
  annular links}, Algebr. Geom. Topol. \textbf{10} (2010), no.~4,  2009--2039.

\bibitem{GWColJones}
---{}---{}---, \emph{On the colored Jones polynomial, sutured Floer homology,
  and knot Floer homology}, Adv. Math. \textbf{223} (2010), no.~6,  2114--2165.

\bibitem{GWGradings}
---{}---{}---, \emph{On gradings in Khovanov homology and sutured Floer
  homology}, in Topology and geometry in dimension three, Vol. 560 of
  \emph{Contemporary Mathematics}, 111--128, Amer. Math. Soc., Providence, RI
  (2011).

\bibitem{HN}
M.~Hedden and Y.~Ni, \emph{Manifolds with small Heegaard Floer ranks}, Geom.
  Topol. \textbf{14} (2010), no.~3,  1479--1501.

\bibitem{Ju1}
A.~Juh\'asz, \emph{Holomorphic discs and sutured manifolds}, Algebr. Geom.
  Topol. \textbf{6} (2006) 1429--1457.

\bibitem{Ju2}
---{}---{}---, \emph{Floer homology and surface decompositions}, Geom. Topol.
  \textbf{12} (2008), no.~1,  299--350.

\bibitem{KhovJones}
M.~Khovanov, \emph{A categorification of the Jones polynomial}, Duke Math. J.
  \textbf{101} (2000), no.~3,  359--426.

\bibitem{KhTangle}
---{}---{}---, \emph{A functor-valued invariant of tangles}, Algebr. Geom.
  Topol. \textbf{2} (2002) 665--741.

\bibitem{KhovColJones}
---{}---{}---, \emph{Categorifications of the colored {J}ones polynomial}, J.
  Knot Theory Ramifications \textbf{14} (2005), no.~1,  111--130.

\bibitem{KM2010}
P.~Kronheimer and T.~Mrowka, \emph{Khovanov homology is an unknot-detector},
  Publ. Math. Inst. Hautes \'Etudes Sci. \textbf{113} (2011), no.~1,  97--208.

\bibitem{MS}
W.~Meeks and P.~Scott, \emph{Finite group actions on $3$-manifolds}, Invent.
  Math. \textbf{86} (1986), no.~2,  287--346.

\bibitem{MSY}
W.~Meeks, L.~Simon, and S.~Yau, \emph{Embedded minimal surfaces, exotic
  spheres, and manifolds with positive Ricci curvature}, Ann. of Math. (2)
  \textbf{116} (1982), no.~3,  621--659.

\bibitem{NiFibred}
Y.~Ni, \emph{Knot Floer homology detects fibred knots}, Invent. Math.
  \textbf{170} (2007), no.~3,  577--608.

\bibitem{OSzBrCov}
P.~Ozsv\'ath and Z.~Szab\'o, \emph{On the Heegaard Floer homology of branched
  double-covers}, Adv. Math. \textbf{194} (2005), no.~1,  1--33.

\bibitem{Roberts}
L.~Roberts, \emph{On knot Floer homology in double branched covers} (2007).
  Available at
  \href{http://front.math.ucdavis.edu/0706.0741}{arXiv:math.GT/0706.0741}.

\end{thebibliography}

\end{document}